\documentclass[12pt]{amsart}

\title[Hyperconvex manifold without bounded holomorphic functions]{On a hyperconvex manifold without non-constant bounded holomorphic functions}

\author{Masanori Adachi}
\address{Department of Mathematics, Faculty of Science, Shizuoka University.  836 Ohya, Suruga-ku, Shizuoka 422-8529, Japan.}
\email{adachi.masanori@shizuoka.ac.jp}

\date{\today}

\dedicatory{Dedicated to Professor~Kang-Tae~Kim on the~occasion of his~60th ~birthday} 

\usepackage{amsmath,amsthm,amssymb,latexsym}
\usepackage[abbrev]{amsrefs}


\newtheorem*{Theorem*}{Theorem}
\newtheorem*{Conjecture*}{Conjecture}

\newtheorem{Theorem}{Theorem}[section]
\newtheorem{Fact}[Theorem]{Fact}
\newtheorem{Proposition}[Theorem]{Proposition}
\newtheorem{Definition}[Theorem]{Definition}

\newtheorem{Problem}{Problem}

\theoremstyle{remark}
\newtheorem{Remark}[Theorem]{Remark}

\newcommand\C{\mathbb{C}}  
\newcommand\R{\mathbb{R}}

\newcommand\PP{\mathbb{P}}

\newcommand{\pa}{\partial}
\newcommand{\opa}{\overline\pa}
\newcommand{\ol}{\overline }
\newcommand{\D}{\mathbb{D}}

\begin{document}

\maketitle

\begin{abstract}
An example is given of a hyperconvex manifold without non-constant bounded holomorphic functions, 
which is realized as a domain with real-analytic Levi-flat boundary in a projective surface.
\end{abstract}

\section{Introduction}
In geometric complex analysis, hyperbolicity and parabolicity of non-compact complex manifolds 
are key properties governing behavior of holomorphic functions.
Stoll \cite{Sto} introduced the notion of \emph{parabolic manifold} to investigate value distribution of holomorphic functions in several variables. We recall this notion using the formulation of Aytuna--Sadullaev \cite{AySa}:
\begin{Definition}
A complex manifold $X$ is said to be \emph{parabolic} if $X$ does not admit non-constant bounded plurisubharmonic function. We say that $X$ is \emph{$S$-parabolic} if it possesses a plurisubharmonic exhaustion $\varphi$ that satisfies the homogeneous complex Monge--Amp\`ere equation $(i\pa\opa \varphi)^n = 0$ on $X \setminus K$ for some compact subset $K \subset X$. 
\end{Definition}
$S$-parabolic manifolds are parabolic, and their model case is $\C^n$ equipped with the exhaustion $\log \|z\|$.
We refer the reader to Aytuna--Sadullaev \cite{AySa} for the detail.

On the other hand, it would also be of  interest to investigate non-compact complex manifolds that are not parabolic in the sense above but enjoy some weaker parabolicity. 
Myrberg \cite{My} gave such an example in one dimensional setting, namely, an open Riemann surface of infinite genus 
that has smooth boundary component, hence, not parabolic, but on which all the bounded holomorphic functions are constant. 
This celebrated example was the driving force toward the classification theory of Riemann surfaces (cf. Heins \cite{He}). 

In several complex variables, this sort of intermediate parabolicity actually appears too. See  Aytuna--Sadullaev \cite{AySa} for an example of unbounded pseudoconvex domain in $\C^n$ containing countably many copies of $\C$ and having plurisubharmonic defining function but no bounded holomorphic function except for constant functions. 
The purpose of this article is to remark another kind of example of non-parabolic Stein manifold without non-constant bounded holomorphic function, which the author hopes to be useful for further study. 

\begin{Theorem*}
There exists a hyperconvex manifold that does not possess any non-constant bounded holomorphic function and is realized as a domain with real-analytic Levi-flat boundary. 
\end{Theorem*}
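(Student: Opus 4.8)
The plan is to realize the example as a disc bundle over a compact Riemann surface sitting on one side of a real-analytic Levi-flat hypersurface in a ruled surface. Fix a compact Riemann surface $\Sigma$ of genus $\geq 2$ and uniformize $\Sigma=\D/\Gamma$, with $\Gamma=\pi_{1}(\Sigma)\subset\mathrm{PSU}(1,1)=\mathrm{Aut}(\D)$ a torsion-free cocompact Fuchsian group. Choose a \emph{faithful} Fuchsian representation $\rho\colon\Gamma\to\mathrm{PSU}(1,1)$ with cocompact image $\Gamma'=\rho(\Gamma)$, so that $\Sigma'=\D/\Gamma'$ is again a compact Riemann surface of genus $\geq 2$; for the Stein refinement stated in the introduction one takes $\rho$ generic, in particular with $\Sigma'$ biholomorphic neither to $\Sigma$ nor to $\ol{\Sigma}$. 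Regarding $\mathrm{PSU}(1,1)$ as the M\"obius transformations of $\PP^{1}$ preserving $\D$, form the flat $\PP^{1}$-bundle $X:=(\D\times\PP^{1})/\Gamma$ for the action $\gamma\cdot(z,\zeta)=(\gamma z,\rho(\gamma)\zeta)$; this is a ruled surface over $\Sigma$, hence a projective (K\"ahler) surface. Each $\rho(\gamma)$ preserves the splitting $\PP^{1}=\D\sqcup\pa\D\sqcup(\PP^{1}\setminus\ol{\D})$, so $X$ is divided by the hypersurface $M:=(\D\times\pa\D)/\Gamma$ into two domains, of which I take $\Omega:=(\D\times\D)/\Gamma$, a disc bundle over $\Sigma$ with $\pa\Omega=M$. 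Since $M$ is foliated by the images of the holomorphic discs $\D\times\{\xi\}$, $\xi\in\pa\D$, it is Levi-flat, and it is real-analytic because $\pa\D$ is real-analytic in $\PP^{1}$; thus $\Omega$ is a domain with real-analytic Levi-flat boundary in the projective surface $X$.

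For hyperconvexity I would note that $\Omega$ is a relatively compact pseudoconvex domain with smooth (indeed real-analytic Levi-flat) boundary in the compact K\"ahler surface $X$, so that by the theorem of Ohsawa--Sibony on bounded plurisubharmonic functions in K\"ahler manifolds — or, concretely, by building the exhaustion from the fibrewise Poincar\'e geometry of the disc bundle as in Diederich--Ohsawa — it carries a bounded plurisubharmonic exhaustion; in particular $\Omega$ is non-parabolic. When $\rho$ is generic as above, the Diederich--Ohsawa study of these disc bundles further shows $\Omega$ is Stein: the $\rho$-equivariant harmonic section of $\Omega\to\Sigma$ is neither holomorphic nor anti-holomorphic, since such a section would yield a biholomorphism $\Sigma'\cong\Sigma$ (resp.\ $\Sigma'\cong\ol{\Sigma}$) in the homotopy class of the marking, which the choice of $\rho$ excludes; hence $\Omega$ is Stein and, a fortiori, contains no compact complex curve. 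This provides the non-parabolic Stein manifold of the introduction.

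The decisive step is the absence of non-constant bounded holomorphic functions. Given $f\in H^{\infty}(\Omega)$, lift it through the universal covering to a bounded holomorphic function $\tilde f$ on the bidisc $\D\times\D$, invariant under $\gamma\cdot(z,\zeta)=(\gamma z,\rho(\gamma)\zeta)$. Every bounded holomorphic function on the bidisc is the Poisson integral of its boundary function on the distinguished boundary $\mathbb{T}^{2}:=\pa\D\times\pa\D$; let $f^{*}\in L^{\infty}(\mathbb{T}^{2})$ be that of $\tilde f$. Since each $\gamma$ and each $\rho(\gamma)$ extends to a real-analytic diffeomorphism of $\ol{\D}$ preserving $\pa\D$, boundary values transform covariantly, so $f^{*}$ is almost everywhere invariant under the diagonal boundary action $\gamma\cdot(\eta,\xi)=(\gamma\eta,\rho(\gamma)\xi)$ of $\Gamma$ on $\mathbb{T}^{2}$. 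The crucial fact is that this action is ergodic with respect to Lebesgue measure on $\mathbb{T}^{2}$: for $\rho$ the uniformization this is exactly the ergodicity of the geodesic flow on the unit tangent bundle of $\Sigma$ (Hopf), and for general cocompact Fuchsian $\rho$ it persists — the boundary action of $\Gamma$ on $\pa\D$ via $\rho$ being ergodic and the one via the uniformization being a metrically ergodic boundary action, whence their product on $\mathbb{T}^{2}$ is ergodic. Therefore $f^{*}$ is almost everywhere constant, so $\tilde f$, being its Poisson integral, equals that constant, and $f$ is constant on $\Omega$.

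The main obstacle I anticipate is precisely this ergodicity, together with the verification of Steinness. For a merely hyperconvex example the plain choice $\rho=\mathrm{id}$ suffices and one needs only the classical ergodicity of the geodesic flow; to secure the Stein example one must move $\rho$ off the uniformization locus, and then the two genuinely non-formal inputs are the Stein property of $\Omega$ — equivalently, non-holomorphicity of the equivariant harmonic section — and the ergodicity of the resulting self-joining action of $\Gamma$ on $\pa\D\times\pa\D$, which rests on boundary theory rather than on a single explicit flow. By comparison, the construction of $X$, $M$, $\Omega$ and the hyperconvexity of $\Omega$ are routine or citable.
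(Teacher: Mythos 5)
The central gap is in the hyperconvexity step, and it is fatal for the specific choice you single out. With the untwisted action $\gamma\cdot(z,\zeta)=(\gamma z,\rho(\gamma)\zeta)$ and $\rho=\mathrm{id}$ (the uniformization itself), the diagonal $\{(z,z)\}\subset\D\times\D$ is $\Gamma$-invariant and descends to a compact holomorphic curve $C\cong\Sigma$ inside $\Omega$ (equivalently, the identity map gives a holomorphic section of the disc bundle). No complex manifold containing a compact curve carries a strictly plurisubharmonic bounded exhaustion $\varphi$: one would have $\int_C i\pa\opa\varphi>0$ by strictness, while $\int_C i\pa\opa\varphi=0$ by Stokes (or: $\varphi|_C$ is subharmonic on a compact curve, hence constant, contradicting strictness). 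So your claim that ``the plain choice $\rho=\mathrm{id}$ suffices for a merely hyperconvex example'' is false: that $\Omega$ is weakly $1$-complete and satisfies the Liouville property by Hopf, but it is not hyperconvex. The same example shows the general citations you lean on cannot carry the step: Diederich--Ohsawa produce an unbounded plurisubharmonic exhaustion, and Ohsawa--Sibony-type results on bounded exhaustions need positivity of the ambient manifold (e.g.\ $\PP^n$) and cannot hold for arbitrary smoothly bounded Levi-flat domains in ruled surfaces over higher-genus curves --- the $\rho=\mathrm{id}$ bundle is itself a counterexample. For the generic $\rho$ you propose in order to get Steinness, hyperconvexity is again not provided by anything you cite, and the ergodicity of the diagonal action on $\pa\D\times\pa\D$ twisted by two non-conjugate Fuchsian representations is a genuinely nontrivial assertion you do not prove (Hopf covers only the honest diagonal action; your ``metrically ergodic, whence the product is ergodic'' sentence is not an argument).

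The paper threads this needle with precisely one of the two choices you exclude: the second factor carries the conjugated action, $\gamma\cdot(z,w)=(\gamma z,\overline{\gamma\ol{w}})$, i.e.\ in your language $\rho$ is the complex conjugate of the uniformization representation. Then the invariant diagonal $\{(z,\ol{z})\}$ descends to a totally real surface (the center of a Grauert tube), not a complex curve, and hyperconvexity is proved by an explicit computation: $\delta(z,w)=1-\left|\frac{w-\ol{z}}{1-zw}\right|^2$ is $\Gamma$-invariant and $-\sqrt{\delta}$ is shown to be a strictly plurisubharmonic bounded exhaustion, identifying $X$ with the maximal Grauert tube of $\Sigma$ (the paper's \S2). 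The Liouville property is then exactly your argument --- Fatou on the bidisc plus Hopf --- since precomposing the boundary value with conjugation in $w$ turns the twisted invariance into invariance under the genuine diagonal action on $\pa\D\times\pa\D$. To salvage your route you must either adopt this conjugate twist together with an explicit bounded exhaustion, or actually prove both hyperconvexity and product-boundary ergodicity for a general Fuchsian $\rho$; neither is routine or currently citable in the generality you assume.
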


Here hyperconvexity is defined as

\begin{Definition}
A complex manifold $X$ is said to be \emph{hyperconvex} if it admits strictly plurisubharmonic bounded exhaustion. 
\end{Definition}

Recall that a function on a topological space $X$, $\varphi\colon X \to [-\infty, c)$, is said to be \emph{bounded exhaustion} if all the sublevel sets $\{ x \in X \mid \varphi(x) < b \}$, $b <c$, are relatively compact in $X$. For example, any $C^2$-smoothly bounded pseudoconvex domains in Stein manifolds is hyperconvex (Diederich--Forn{\ae}ss \cite{DiFo}. Later the required smoothness was relaxed to $C^1$ by Kerzman--Rosay \cite{KeR}, then to Lipschitz boundary by Demailly \cite{De}). Clearly, a hyperconvex manifold is not parabolic, but Theorem states that it can satisfy the Liouville property. 

Now we explain the construction of the manifold claimed in Theorem. Let $\Sigma$ be a compact Riemann surface of genus $\geq 2$ and fix its uniformization $\Sigma = \D / \Gamma$ by a Fuchsian group $\Gamma$ acting on the unit disk $\D$. We make $\Gamma$ act on the bidisk $\D \times \D$ diagonally but with conjugated complex structure for second factor, namely, for each $\gamma \in \Gamma$  and $(z, w) \in \D \times \D$, we let
\[
\gamma \cdot (z, w) := (\gamma z, \overline{\gamma \overline{w}}).
\]
We shall show that the quotient space $X := \D \times \D / \Gamma$ enjoys the desired property. 

This example has two origins. One is the work by Diederich--Ohsawa \cite{DiO}, where holomorphic $\D$-bundles over compact K\"ahler manifolds are shown to be weakly 1-complete. Such a holomorphic $\D$-bundle is canonically embedded in the associated holomorphic $\C\PP^1$-bundle as a pseudoconvex domain with real-analytic Levi-flat boundary.
In our case, the first and the second projection endow $X$ structures of $\D$-bundle over $\Sigma$ and $\ol{\Sigma}$, the quotient of $\D$ by the conjugated action of $\Gamma$, respectively. Hence, $X$ has two realization as domains in ruled surfaces $Y := \D \times \C\PP^1 / \Gamma$ and $Y' := \C\PP^1 \times \D /\Gamma$, where the action of $\Gamma$ is the same as above thanks to the fact $\mathrm{Aut}(\D) \subset \mathrm{Aut}(\C\PP^1)$. The Levi-flat boundaries of $X$ in $Y$ and $Y'$ are denoted by $M = \D \times \pa\D / \Gamma$ and $M' = \pa\D \times \D /\Gamma$ respectively. In summary, we have two natural ways to realize $X$ in larger complex manifolds $Y$ and $Y'$ and the real-analytic boundaries $M$ and $M'$ are inequivalent CR manifolds in general (Mitsumatsu \cite{Mi}). 
For further background on $\D$-bundles, we refer the reader to a recent study by Deng--Forn{\ae}ss \cite{DeFo}.

Another origin is the Grauert tube of maximal radius in the sense of Guillemin--Stenzel \cite{GuSt} and Lempert--Sz\H{o}ke \cite{LSz}. Since the conjugated diagonal set $\{ (z, \overline{z}) \mid z \in \D\} \subset \D \times \D$ is preserved under the action of $\Gamma$, its quotient $S$ is totally-real submanifold of real dimension two and isomorphic to $\Sigma$ as real-analytic manifold. Namely, $X$ is a complexification of $\Sigma$. Not only that, we can find a plurisubharmonic \emph{bounded} exhaustion that satisfies the homogeneous complex Monge--Amp\`ere equation on $X \setminus S$.

In \S2, we first confirm that our $X$ coincides with the Grauert tube of $\Sigma$, then show the hyperconvexity of $X$.
In \S3, after explaining that the Liouville property of $X$ is actually a corollary of Hopf's ergodicity theorem, we shall give another proof for the Liouville property using the plurisubharmonic bounded exhaustion. In \S4, some open questions are posed. 

\section{Grauert tube and its hyperconvexity}

First we recall the notion of Grauert tube  in the sense of Guillemin--Stenzel  and Lempert--Sz\H{o}ke. 
\begin{Fact}[Guillemin--Stenzel \cite{GuSt}, Lempert--Sz\H{o}ke \cite{LSz}]
\label{GSLS}
Let $(M, g)$ be a compact real-analytic Riemannian manifold of dimension $n$. Denote by $\rho\colon TM \to \R_{\geq 0}$ the length function, and we identify $M$ with the zero section of $TM$. Then, there exists $R \in (0, \infty]$ and unique complex structure on $X := \{ v \in TM \mid \rho(v) < R \}$ such that \begin{enumerate}
\item $\rho$ enjoys the homogeneous complex Monge--Amp\`ere equation $(i\pa\opa \rho)^n = 0$ on $X \setminus M$;
\item $\rho^2$ is strictly plurisubharmonic on $X$;
\item $i\pa\opa (\rho^2)$ agrees with $g$ on $TM$.
\end{enumerate}
\end{Fact}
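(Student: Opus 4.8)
The plan is to follow the construction of Lempert--Sz\H{o}ke via \emph{adapted complex structures}, complemented by the potential-theoretic viewpoint of Guillemin--Stenzel. Since $M$ is real-analytic it admits a complexification (Bruhat--Whitney, Grauert): a complex $n$-manifold containing $M$ as a maximal totally-real submanifold, unique as a germ along $M$. Identifying the holomorphic directions transverse to $M$ with the fibres of $TM$, a neighbourhood of $M$ in this complexification is diffeomorphic to a neighbourhood of the zero section of $TM$; the task is to single out the complex structure on $\{ \rho < R\}$ realizing \textup{(1)--(3)}.

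First I would build the adapted complex structure. Real-analyticity of $g$ makes the geodesic flow real-analytic, so for every geodesic $\gamma \colon \R \to M$ the assignment $\Phi_\gamma(s,t) := t\,\dot\gamma(s) \in TM$ makes sense, and one \emph{decrees} each $\Phi_\gamma$ to be holomorphic in the variable $s + it$. Since $v = \Phi_{\gamma_v}(0, \rho(v))$ for the unit-speed geodesic $\gamma_v$ with $\dot\gamma_v(0) = v/\rho(v)$, the images of the $\Phi_\gamma$ cover $TM$; one has to check that these prescriptions are mutually compatible and assemble into an \emph{integrable} almost complex structure $J$ on a tube $\{ \rho < R \}$ of some radius $R > 0$. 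This is exactly where real-analyticity is indispensable — for merely smooth $g$ such a germ need not exist — the verification resting on a Cauchy--Kovalevskaya / convergence argument for the complexified exponential map.

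Next I would identify the potential. Put $\rho(v) := |v|_g$. Along each leaf one computes $\rho \equiv |t|\cdot |\dot\gamma|_g$ and $\rho^2 \equiv t^2 |\dot\gamma|_g^2$; as $|t|$ is harmonic on $\{ t > 0\}$ and on $\{ t < 0\}$ while $t^2$ is strictly subharmonic, and as the leaves foliate $\{ 0 < \rho < R\}$ by one-dimensional complex submanifolds, the complex Hessian of $\rho$ is degenerate in the leaf direction, so $(i\pa\opa\rho)^n = 0$ on $X \setminus M$. To promote this to \textup{(1)--(3)}, I would compute $i\pa\opa(\rho^2)$ along the zero section in adapted coordinates: $\rho^2$ is real-analytic and vanishes to second order on the totally-real $M$, and the leafwise data together with the totally-real condition force $i\pa\opa(\rho^2)|_{TM} = g$, in particular strict positive-definiteness there. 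Shrinking $R$ if necessary — or exploiting the dilation $v \mapsto \lambda v$ to spread positivity across the tube — yields that $\rho^2$ is strictly plurisubharmonic on all of $X$, whence also $i\pa\opa\rho \geq 0$.

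Finally, uniqueness. If $J_1$ and $J_2$ both satisfy \textup{(1)--(3)} on tubes, then on each the null foliation of $i\pa\opa\rho$ consists of holomorphic curves on which $\rho$ is harmonic, and one argues that the leaf through $v$ must be the geodesic strip $\Phi_{\gamma_v}$, since it is pinned down by $d\rho$ together with the requirement of harmonicity of $\rho$ on a complex curve tangent to $v$. Hence both structures make every $\Phi_\gamma$ holomorphic and therefore agree; connectedness of the tube propagates $J_1 = J_2$ over the common domain, and $R$ is then the supremum of radii of tubes admitting such a structure. The step I expect to be the main obstacle is the existence in the second paragraph together with the global plurisubharmonicity in the third: producing an honest adapted complex structure on a tube of positive radius, and checking that $\rho$ — patently harmonic only along the leaves — is plurisubharmonic across the whole tube; both rely essentially on the real-analyticity of $g$.
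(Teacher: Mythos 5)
This statement is quoted in the paper as a \emph{Fact}, with references to Guillemin--Stenzel \cite{GuSt} and Lempert--Sz\H{o}ke \cite{LSz}; the paper itself gives no proof of it (it only invokes \cite{LSz} later, in Proposition \ref{grauert_tube}, to identify $X$ with the Grauert tube). So there is no in-paper argument to compare yours against. What you have written is a sketch of the standard route in the literature: complexification of $M$, the adapted complex structure obtained by declaring the geodesic strips $\Phi_\gamma(s,t)=t\dot\gamma(s)$ holomorphic in $s+it$ (equivalently, pulling back the structure via the complexified exponential map), identification of $\rho$, $\rho^2$ as the Monge--Amp\`ere potentials, and uniqueness via the Monge--Amp\`ere (null) foliation. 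That is indeed the Lempert--Sz\H{o}ke/Guillemin--Stenzel approach, and the uniqueness paragraph captures the right idea.

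As a proof, however, two steps are genuinely gapped, and they are exactly where the theorem's content lies. First, from harmonicity of $\rho$ along each strip you conclude that the complex Hessian of $\rho$ is ``degenerate in the leaf direction,'' hence $(i\pa\opa\rho)^n=0$. Leafwise harmonicity only says that the Levi form of $\rho$ vanishes \emph{evaluated on} the leaf tangent $\xi$, i.e.\ $\xi$ is a null vector; a null vector of an indefinite Hermitian form does not make its determinant vanish (e.g.\ $\mathrm{diag}(1,-1)$ with $\xi=(1,1)$). To conclude you need $i\pa\opa\rho\ge 0$, and this does \emph{not} follow from strict plurisubharmonicity of $\rho^2$: writing $w=\rho^2$, one has $i\pa\opa\sqrt{w}=\tfrac{1}{2\sqrt{w}}\bigl(i\pa\opa w-\tfrac{1}{2w}\,i\pa w\wedge\opa w\bigr)$, with a negative correction term, so square roots of strictly psh functions need not be psh. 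In \cite{LSz} the semipositivity of $i\pa\opa\rho$ and the strict positivity of $i\pa\opa(\rho^2)$ on the whole tube are obtained from an explicit computation of the Levi form in terms of Jacobi fields along geodesics; this computation is the analytic heart and cannot be replaced by the soft foliation argument (compare how Proposition \ref{grauert_tube} of the paper must carry out exactly this balance by hand for $\delta$). Second, the proposed alternative of ``exploiting the dilation $v\mapsto\lambda v$ to spread positivity'' fails: the fiberwise dilations are not holomorphic for the adapted structure (on each strip they act by $s+it\mapsto s+i\lambda t$), so they do not transport Levi-form positivity. Finally, the existence/integrability of $J$ on a tube of positive radius is only named (Cauchy--Kovalevskaya/convergence of the complexified exponential map); that deferral is acceptable in a sketch citing \cite{GuSt, LSz}, but together with the positivity issue above it means the proposal records the strategy rather than proving the Fact.
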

This $X$ above is called \emph{the Grauert tube of $\Sigma$ of radius $R$}. 
Since our $\Sigma$ is endowed with the hyperbolic metric of constant Gaussian curvature $-1$, whose fundamental form is
\[
g(z) = \frac{2i dz \wedge d\ol{z}}{(1-|z|^2)^2},
\]
Lempert--Sz\H{o}ke \cite[Theorem 4.3]{LSz} yields an upper bound of the radius $R$ of the Grauert tube of $\Sigma$, $R \leq \pi/2$. 

\begin{Proposition}
\label{grauert_tube}
The complex manifold $X$ defined in \S1 is biholomorphic to the Grauert tube of $\Sigma$ of radius $\pi/2$, which is maximum possible, whose length function agrees with
\[
\rho(z,w) := \arccos \sqrt \delta \quad 
\text{where} \quad 
\delta(z, w) := 1 - \left| \frac{w - \overline{z}}{1 - zw} \right|^2.
\]
\end{Proposition}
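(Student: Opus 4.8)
The plan is to produce an explicit biholomorphism between $X$ and the Grauert tube of $\Sigma$, built out of complexified geodesics, and to read off the length function along the way. The first step is the elementary identity $|1-zw|^2-|w-\ol z|^2=(1-|z|^2)(1-|w|^2)$, which rewrites
\[
\delta(z,w)=\frac{(1-|z|^2)(1-|w|^2)}{|1-zw|^2}.
\]
In this form the invariance of $\delta$ under the action $\gamma\cdot(z,w)=(\gamma z,\ol{\gamma\ol{w}})$ of all of $\mathrm{Aut}(\D)$ falls out of the transformation rules $1-|\gamma z|^2=(1-|z|^2)/|\ol{b}z+\ol{a}|^2$ and $1-\gamma z\cdot\ol{\gamma\ol{w}}=(1-zw)/\bigl((\ol{b}z+\ol{a})(bw+a)\bigr)$ for $\gamma(\zeta)=(a\zeta+b)/(\ol{b}\zeta+\ol{a})$; in particular $\delta$, and hence $\rho=\arccos\sqrt\delta$, descends to $X$. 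Since $\delta\in(0,1]$ on $\D\times\D$ with equality exactly on the conjugate diagonal, $\rho$ takes values in $[0,\pi/2)$ with $\rho^{-1}(0)=S$, and each sublevel set $\{\rho<b\}$ is the image of $\{\delta>\cos^2 b\}$, which is relatively compact because $\Sigma$ is compact and the corresponding subsets of the fibres of the first projection are relatively compact pseudohyperbolic disks; thus $\rho$ is a bounded exhaustion. One also checks at once that $S=\{(z,\ol z)\}/\Gamma$ is a totally real submanifold of real dimension $2$ which the first projection identifies real-analytically with $\Sigma$.

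For the biholomorphism, for a unit tangent vector $e$ of $(\D,g)$ let $\sigma_e$ be the unit-speed geodesic with $\dot\sigma_e(0)=e$; being real-analytic and of curvature $-1$, its holomorphic continuation $\hat\sigma_e$ maps the strip $\{|\Im\zeta|<\pi/2\}$ into $\D$. For $v\in T\D$ with hyperbolic length $|v|<\pi/2$ I set
\[
F(v):=\bigl(\hat\sigma_{v/|v|}(i|v|),\ \ol{\hat\sigma_{v/|v|}(-i|v|)}\bigr)\in\D\times\D\quad(v\neq 0),\qquad F(0_p):=(p,\ol p).
\]
I would check that $F$ is a $\Gamma$-equivariant diffeomorphism from $\{v\in T\D\mid |v|<\pi/2\}$ onto $\D\times\D$: equivariance because elements of $\mathrm{Aut}(\D)$ are isometries that carry geodesics to geodesics and commute with holomorphic continuation, and the diffeomorphism property because $F$ is a proper local diffeomorphism onto the connected and simply connected $\D\times\D$ (properness because $\delta\circ F$ staying away from $0$ forces $|v|$ away from $\pi/2$). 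A single computation over $p=0$ gives $F(v)=\bigl(i\tan(|v|/2)e^{i\theta},\,i\tan(|v|/2)e^{-i\theta}\bigr)$ for $v$ in the direction $\theta$, whence $\delta\circ F=\cos^2|v|$ there; since $\delta\circ F$ and $|v|$ are $\mathrm{Aut}(\D)$-invariant and the base action is transitive, $(\arccos\sqrt\delta)\circ F=|v|$ throughout. The decisive point is that $F$ is holomorphic once $\{v\in T\D\mid|v|<\pi/2\}$ carries the adapted complex structure of Guillemin--Stenzel and Lempert--Sz\H{o}ke, whose $\Gamma$-quotient is the Grauert tube structure of $\Sigma$ of radius $\pi/2$: $F$ carries each complexified geodesic $s+it\mapsto t\,\dot\sigma_e(s)$ to the curve $\zeta\mapsto\bigl(\hat\sigma_e(\zeta),\ \ol{\hat\sigma_e(\ol\zeta)}\bigr)$, whose two components are holomorphic in $\zeta$, and the adapted complex structure is precisely the one making all such complexified geodesics holomorphic; hence $F$ intertwines it with the standard structure of $\D\times\D$. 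Passing to the $\Gamma$-quotient, $F$ descends to a biholomorphism of the Grauert tube of $\Sigma$ of radius $\pi/2$ onto $X$ carrying the zero section to $S$ and the length function to $\rho=\arccos\sqrt\delta$; and $\pi/2$ is the largest admissible radius by the curvature bound of Lempert--Sz\H{o}ke \cite[Theorem~4.3]{LSz} quoted above.

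I expect the point needing most care to be the bookkeeping behind the claim that $F$ is a diffeomorphism onto $\D\times\D$ and behind the equivalence of the ``complexified geodesics are holomorphic'' description of the adapted complex structure with properties~(1)--(3) of Fact~\ref{GSLS}; both are contained in \cite{GuSt} and \cite{LSz}, but they are what legitimises the identification. As a by-product, or as an alternative that avoids geodesics altogether, one can verify (1)--(3) for $(X,S,\rho)$ directly. Put $t:=-\log\delta=-\log(1-|z|^2)-\log(1-|w|^2)+\log|1-zw|^2$, which is strictly plurisubharmonic since $-\log(1-|z|^2)$ is and $\log|1-zw|$ is pluriharmonic; writing $\rho=h(t)$ with $h(t)=\arccos(e^{-t/2})$ and simplifying $\pa t$ and $i\pa\opa t$, the equation $(i\pa\opa\rho)^2=0$ on $X\setminus S$ reduces to the ordinary differential equation $h'/h''=-2(1-e^{-t})$, which $h$ satisfies; then (2) follows because in $i\pa\opa(\rho^2)=2hh'\,i\pa\opa t+2\bigl((h')^2+hh''\bigr)\,i\pa t\wedge\opa t$ the rank-one correction term is governed by $2hh'+2\bigl((h')^2+hh''\bigr)\cdot 2(1-e^{-t})=-2(h')^3/h''>0$, with the behaviour along $S$ handled by $\pa t|_S=0$; and (3) follows because $i\pa\opa(\rho^2)|_S=i\pa\opa t|_S$ restricts on $TS$ to the hyperbolic metric. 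Either way, uniqueness in Fact~\ref{GSLS} yields the claim.
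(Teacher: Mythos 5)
Your argument is correct, but your main route is genuinely different from the paper's. The paper proves the proposition by brute-force verification of conditions (1)--(3) of Fact~\ref{GSLS} for $\rho=\arccos\sqrt\delta$: it computes $i\pa\opa(-\log\delta)$ and $i\pa(-\log\delta)\wedge\opa(-\log\delta)$, deduces $(i\pa\opa\rho)^2=0$ off $S$ and the strict positivity of $i\pa\opa(\rho^2)$ with the correct restriction to $S$, and then identifies $X$ with the Grauert tube by citing the characterization theorem \cite[Theorem~3.1]{LSz}. Your closing ``alternative'' is exactly this argument repackaged: your ODE $h'/h''=-2(1-e^{-t})$ is the paper's identity $i\pa\opa\rho=h'\bigl(i\pa\opa t-\tfrac{1}{2(1-\delta)}\,i\pa t\wedge\opa t\bigr)$, and your eigenvalue $-2(h')^3/h''$ is the coefficient of the worst direction in the paper's formula for $i\pa\opa(\rho^2)$; only note that invoking ``uniqueness in Fact~\ref{GSLS}'' is slightly loose, since that uniqueness concerns complex structures on tubes inside $TM$, and passing from an abstract triple $(X,S,\rho)$ satisfying (1)--(3) to the Grauert tube is precisely what \cite[Theorem~3.1]{LSz} supplies --- which is why the paper cites it. Your primary route, the explicit map $F$ built from complexified geodesics, is a different and more geometric proof: it exhibits the biholomorphism concretely, makes the Monge--Amp\`ere foliation visible (the curves $\zeta\mapsto(\hat\sigma_e(\zeta),\ol{\hat\sigma_e(\ol\zeta)})$ are its leaves), and trades the appeal to \cite[Theorem~3.1]{LSz} for the (standard, but load-bearing, as you rightly flag) facts from \cite{LSz} that the adapted complex structure is the unique one making complexified geodesics holomorphic and that it satisfies (1)--(3) with the length function. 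Two small points to tighten there: properness of $F$ needs control of the foot point as well as of $|v|$ (use that the hyperbolic distance from $p$ to $\hat\sigma_e(i|v|)$ equals $2\operatorname{arctanh}(\tan(|v|/2))$, bounded when $|v|$ stays away from $\pi/2$), and the local-diffeomorphism property across the zero section deserves a word, e.g.\ via the explicit formula over $p=0$, where $F$ is real-analytic with invertible differential along $S$.
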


\begin{proof}
First note that $\delta \colon \D \times \D \to (0,1]$ is invariant under the action of $\Gamma$ and induces a real-analytic function on $X$. 
Hence, $\rho \colon X \to [0, \pi/2)$ is well-defined bounded exhaustion and $\rho^{-1}(0) = S = \{ (z, \overline{z}) \mid z \in \D\}/\Gamma$, 
which we identified with $\Sigma$. Moreover, $\rho^2$ is $C^\infty$-smooth function on $X$ since
\[
\rho(z, w) = \arcsin \left| \frac{w - \overline{z}}{1 - zw} \right|.
\]

In view of Lempert--Sz\H{o}ke \cite[Theorem 3.1]{LSz}, it suffices to confirm that $\rho$ satisfies the three conditions in Fact \ref{GSLS}.
From direct computation, we have
\begin{align*}
i\pa\opa (-\log \delta) &= \frac{i dz \wedge d\ol{z}}{(1-|z|^2)^2} +  \frac{i dw \wedge d\ol{w}}{(1-|w|^2)^2},\\
\frac{i\pa (-\log \delta) \wedge \opa (-\log \delta)}{ 1-\delta } &= \frac{i dz \wedge d\ol{z}}{(1-|z|^2)^2} +  \frac{i dw \wedge d\ol{w}}{(1-|w|^2)^2}+ \frac{i \varepsilon dz \wedge d\ol{w} + i  \ol{\varepsilon} dw \wedge d\ol{z}}{(1-|z|^2)(1-|w|^2)}
\end{align*}
on $X \setminus S$, where $\varepsilon = -(w-\ol{z})(\ol{w} - {z})^{-1}$.  Hence, it follows that
\begin{align*}
\opa \rho 
&=  \frac{1}{2} \sqrt{\frac{\delta}{1 - \delta}} \opa (- \log \delta),\\
i\pa\opa \rho 
&= \frac{1}{2} \sqrt{\frac{\delta}{1-\delta}}\left( i\pa\opa (-\log \delta) - \frac{1}{2} \frac{ i\pa (-\log \delta) \wedge \opa (-\log \delta)}{ 1-\delta } \right)\\
&= \frac{1}{4} \sqrt{\frac{\delta}{1-\delta}}\left( \frac{i dz \wedge d\ol{z}}{(1-|z|^2)^2} +  \frac{i dw \wedge d\ol{w}}{(1-|w|^2)^2} - \frac{i \varepsilon dz \wedge d\ol{w} + i  \ol{\varepsilon} dw \wedge d\ol{z}}{(1-|z|^2)(1-|w|^2)}
 \right),
\end{align*}
and it is now clear that $(i\pa\opa\rho)^2 = 0$ on $X \setminus S$. To check remaining two points, we compute on $X \setminus S$ 
\begin{align*}
i\pa\opa(\rho^2)  &=  2 (\rho i\pa\opa\rho + i\pa \rho \wedge \opa \rho) \\
&= \frac{1}{2} \left( \rho \sqrt{\frac{\delta}{1-\delta}} +  \delta \right) \left(\frac{i dz \wedge d\ol{z}}{(1-|z|^2)^2} +  \frac{i dw \wedge d\ol{w}}{(1-|w|^2)^2}\right)\\
&\quad + \frac{1}{2} \left( -\rho \sqrt{\frac{\delta}{1-\delta}} + \delta\right) \frac{i \varepsilon dz \wedge d\ol{w} + i  \ol{\varepsilon} dw \wedge d\ol{z}}{(1-|z|^2)(1-|w|^2)}.
\end{align*}
It follows that $i\pa\opa(\rho^2) > 0$ on $X$, and $g$ agrees with the restriction of 
\[
i\pa\opa(\rho^2) = \frac{i dz \wedge d\ol{z}}{(1-|z|^2)^2} +  \frac{i dw \wedge d\ol{w}}{(1-|w|^2)^2} = \frac{i dz \wedge d\ol{z} + dw \wedge d\ol{w}}{(1-|z|^2)^2} 
\]
on $S$ as Riemannian metric. The proof is completed.
\end{proof}

\begin{Remark}
Kan \cite{Ka} gave another realization of the Grauert tube of $\Sigma$ extending the construction of Lempert \cite{L}.
\end{Remark}

Next we shall confirm that our $X$ is hyperconvex. 
\begin{Proposition}
\label{hyperconvex}
The function $-\sqrt{\delta}$ is strictly plurisubharmonic bounded exhaustion on $X$. Hence, $X$ is hyperconvex.
\end{Proposition}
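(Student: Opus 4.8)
The plan is to verify the two claimed properties of $-\sqrt{\delta}$ separately: that it is a bounded exhaustion, and that it is strictly plurisubharmonic on all of $X$. The first part is immediate from Proposition \ref{grauert_tube}: since $\delta = \cos^2 \rho$ and $\rho \colon X \to [0, \pi/2)$ is a bounded exhaustion with $\rho^{-1}(0) = S$, the function $\delta$ takes values in $(0,1]$ with $\delta^{-1}(1) = S$, and the sublevel sets $\{ -\sqrt{\delta} < b \} = \{ \delta > b^2 \}$ for $b \in (-1, 0)$ are precisely the sets $\{ \rho < \arccos(-b) \}$, which are relatively compact. So $-\sqrt{\delta}\colon X \to [-1, 0)$ is a bounded exhaustion.

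The substantive point is strict plurisubharmonicity. I would compute $i\pa\opa(-\sqrt{\delta})$ directly, reusing the formulas already established in the proof of Proposition \ref{grauert_tube}. Writing $u = -\log \delta$, we have $-\sqrt{\delta} = -e^{-u/2}$, so
\[
i\pa\opa(-\sqrt{\delta}) = \tfrac{1}{2} e^{-u/2}\left( i\pa\opa u - \tfrac{1}{2} i\pa u \wedge \opa u \right)
= \tfrac{1}{2}\sqrt{\delta}\left( i\pa\opa(-\log\delta) - \tfrac{1}{2}\, i\pa(-\log\delta)\wedge\opa(-\log\delta) \right).
\]
Substituting the two displayed identities for $i\pa\opa(-\log\delta)$ and $i\pa(-\log\delta)\wedge\opa(-\log\delta)$ from the previous proof (the latter multiplied by $1-\delta$), one obtains
\[
i\pa\opa(-\sqrt{\delta}) = \tfrac{1}{4}\sqrt{\delta}\left( (1+\delta)\left(\frac{i\,dz\wedge d\ol{z}}{(1-|z|^2)^2} + \frac{i\,dw\wedge d\ol{w}}{(1-|w|^2)^2}\right) - (1-\delta)\,\frac{i\varepsilon\, dz\wedge d\ol{w} + i\ol{\varepsilon}\,dw\wedge d\ol{z}}{(1-|z|^2)(1-|w|^2)} \right)
\]
on $X \setminus S$, where $\varepsilon = -(w-\ol{z})(\ol{w}-z)^{-1}$ satisfies $|\varepsilon| = 1$. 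The key arithmetic fact is then that the $2\times 2$ Hermitian matrix with diagonal entries proportional to $1+\delta$ and off-diagonal entry proportional to $(1-\delta)\varepsilon$ has positive eigenvalues because $|1-\delta| < 1+\delta$ (as $0 < \delta \leq 1$) and $|\varepsilon| = 1$; hence the form is strictly positive on $X \setminus S$. On $S$ itself one checks positivity either by continuity together with the explicit limiting value, or by noting that $\delta = 1$ there forces the off-diagonal term to vanish and the form to reduce to a positive multiple of the hyperbolic metric appearing in Proposition \ref{grauert_tube}.

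The main obstacle is the eigenvalue estimate at the locus where $\delta$ is small, i.e. near the Levi-flat boundary $M$: there $1+\delta \to 1$ while $1-\delta \to 1$ as well, so the diagonal dominance margin $1+\delta - |1-\delta| = 2\delta \to 0$ degenerates. One must be careful that the overall factor $\sqrt{\delta}$ does not cause the form to fail strict positivity; writing out the determinant of the Hermitian matrix shows it equals $(1+\delta)^2 - (1-\delta)^2 = 4\delta > 0$, so after multiplying by the prefactor $\tfrac14\sqrt\delta$ the product of eigenvalues is a positive multiple of $\delta^{3/2}/\big((1-|z|^2)^2(1-|w|^2)^2\big)$, which is strictly positive throughout $X \setminus S$; combined with the positive trace this gives strict plurisubharmonicity everywhere. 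Since $-\sqrt{\delta}$ is moreover $C^\infty$ (again because $\rho^2$ is, by the $\arcsin$ formula, and $\delta = \cos^2\rho = 1 - \sin^2\rho$), it is a genuine strictly plurisubharmonic bounded exhaustion, and hyperconvexity of $X$ follows at once from the Definition.
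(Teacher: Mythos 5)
Your proof is correct and follows essentially the same route as the paper: it computes $i\pa\opa(-\sqrt{\delta})$ from the formulas for $i\pa\opa(-\log\delta)$ and $i\pa(-\log\delta)\wedge\opa(-\log\delta)$ obtained in Proposition \ref{grauert_tube} and then checks positive definiteness, with your added details (determinant/trace check, the behavior on $S$, the exhaustion property) only elaborating what the paper states briefly, and with the sign of the cross term immaterial since $|\varepsilon|=1$. One trivial slip: the prefactor $\tfrac{1}{4}\sqrt{\delta}$ enters the $2\times 2$ determinant squared, so the product of eigenvalues is a positive multiple of $\delta^{2}/\bigl((1-|z|^2)^2(1-|w|^2)^2\bigr)$ rather than $\delta^{3/2}/\bigl((1-|z|^2)^2(1-|w|^2)^2\bigr)$, which does not affect the positivity conclusion.
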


\begin{proof}
From the computation in the proof of Proposition \ref{grauert_tube}, we have
\begin{align*}
\frac{i\pa\opa (-\sqrt{\delta})}{\sqrt{\delta}/2} &= i\pa\opa(-\log \delta) -\frac{1}{2}i\pa(-\log \delta)\wedge \opa(-\log \delta)\\
&= \frac{1+\delta}{2} \left(\frac{i dz \wedge d\ol{z}}{(1-|z|^2)^2} +  \frac{i dw \wedge d\ol{w}}{(1-|w|^2)^2}\right)+ \frac{1-\delta}{2} \frac{i \varepsilon dz \wedge d\ol{w} + i  \ol{\varepsilon} dw \wedge d\ol{z}}{(1-|z|^2)(1-|w|^2)}
\end{align*}
and this is positive definite everywhere on $X$. 
\end{proof}

\begin{Remark}
We may extend $\delta$ smoothly on a neighborhood of $X$ in $Y$ and also a neighborhood in $Y'$ and regard $-\delta$ as a defining function of $X$ in $Y$ and $X$ in $Y'$. 
Proposition \ref{hyperconvex} shows, by its definition, that $-\delta$ has the Diederich--Forn{\ae}ss exponent $1/2$, which is the maximum possible value for 
relatively compact domains with Levi-flat boundary in complex surfaces (Fu--Shaw \cite{FuSh} and Adachi--Brinkschulte \cite{AdB1}.
See also Demailly \cite[Th\'eor\`eme 6.2]{De}).
\end{Remark}

\section{Proofs of the Liouville property}

Let us observe that the Liouville property of $X$ is actually a corollary of Hopf's ergodicity theorem (\cite{Ho}. See also Tsuji \cite{T}, Garnett \cite{Ga} and Sullivan \cite{Su}). 

\begin{Fact}[Hopf \cite{Ho}]
\label{Hopf}
Let $\Sigma = \D/\Gamma$ be a Riemann surface of finite hyperbolic area.
Then, the diagonal action of $\Gamma$ on $\pa\D \times \pa\D$ is ergodic with respect to its Lebesgue measure. Namely, for any Lebesgue measurable subset $E \subset \pa\D \times \pa\D$ invariant under the diagonal action of $\Gamma$ has Lebesgue measure zero or full Lebesgue measure. 
\end{Fact}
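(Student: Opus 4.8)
As this is Hopf's classical theorem, we only sketch the argument (full details are in \cite{Ho}; see also \cite{T} and \cite{Su}). The plan is to transfer the statement to the geodesic flow. Fix a basepoint of $\D$ and identify the unit tangent bundle $T^1\D$ with $\bigl((\pa\D\times\pa\D)\setminus\Delta\bigr)\times\R$ by recording, for a unit vector, the backward endpoint, the forward endpoint, and the signed arc-length position along its geodesic, where $\Delta$ denotes the diagonal. In these coordinates the geodesic flow $\{\phi_t\}$ is translation in the $\R$-factor, the diagonal $\Gamma$-action on $\pa\D\times\pa\D$ lifts to a $\Gamma$-action commuting with $\{\phi_t\}$, and the Liouville measure is the product of arc-length with a measure in the Lebesgue class of $\pa\D\times\pa\D$ (a constant multiple of $|d\xi|\,|d\eta|/|\xi-\eta|^2$). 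Passing to $T^1\Sigma=\Gamma\backslash T^1\D$, which has finite Liouville volume precisely because $\Sigma$ has finite hyperbolic area, one sees that $\Gamma$-invariant Lebesgue-measurable subsets of $\pa\D\times\pa\D$ correspond bijectively, and compatibly with the two measure classes, to $\{\phi_t\}$-invariant measurable subsets of $T^1\Sigma$. So it is enough to prove that the geodesic flow on $T^1\Sigma$ is ergodic.

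For this I would run the Hopf argument. By density of $C(T^1\Sigma)$ in $L^2$ and $L^2$-continuity of the orthogonal projection $P$ onto the $\{\phi_t\}$-invariant functions, it suffices to show that for every $f\in C(T^1\Sigma)$ the forward time average $f^+$, which equals $Pf$ a.e., is a.e.\ constant; write $f^-$ for the backward time average. Uniform continuity of $f$ on the compact manifold $T^1\Sigma$ shows that $f^+$ is constant along each strong stable manifold --- a stable horocycle, along which two unit vectors have forward geodesic rays that converge to each other, so their Ces\`aro averages of $f$ coincide --- and, being $\{\phi_t\}$-invariant, $f^+$ is constant along each weak stable leaf; since these leaves are exactly the level sets of the ``forward endpoint'' map $T^1\D\to\pa\D$, $f^+$ descends to a measurable function of $\xi\in\pa\D$ alone, and symmetrically $f^-$ descends to a function of $\eta$ alone. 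As $f^+=f^-$ a.e.\ by Birkhoff's theorem, lifting back to $T^1\D$ yields $F(\xi)=G(\eta)$ for a.e.\ $(\xi,\eta)$, and Fubini with respect to $|d\xi|\,|d\eta|$ forces $F$ and $G$ to be a.e.\ constant. Hence $Pf$ is a.e.\ constant for every $f$, i.e.\ the flow is ergodic.

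The step requiring genuine care is the transition just made: $f^+=f^-$ holds a.e.\ with respect to the Liouville measure, and one must know this persists a.e.\ with respect to $|d\xi|\,|d\eta|$. This is the usual absolute-continuity (``Hopf chain'') point of the Hopf argument; in our constant-curvature setting the stable and unstable horocycle foliations are real-analytic, so the needed mutual absolute continuity between the Liouville measure and $(\text{arc-length})\times|d\xi|\,|d\eta|$ is an explicit change of variables rather than the general absolute-continuity theorem for Anosov flows. Finiteness of the area of $\Sigma$ enters only to make $T^1\Sigma$ of finite volume, so that Birkhoff's theorem applies directly; alternatively, ergodicity of $\{\phi_t\}$ --- indeed mixing --- follows for every lattice $\Gamma$ from the Howe--Moore vanishing theorem for the matrix coefficients of $\mathrm{PSL}(2,\R)$.
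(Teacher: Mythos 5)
The paper does not actually prove Fact~\ref{Hopf}: it is quoted from Hopf, with Tsuji, Garnett and Sullivan given as further references, so there is no internal argument to compare against. Your sketch is precisely the classical Hopf argument contained in those sources --- transferring the boundary action to the geodesic flow on $T^1\Sigma$ via the identification $T^1\D \cong \bigl((\pa\D\times\pa\D)\setminus\Delta\bigr)\times\R$, noting that the Liouville measure is $|d\xi|\,|d\eta|/|\xi-\eta|^2 \times dt$ up to a constant, and then running the stable/unstable-leaf argument for the Birkhoff averages --- and its structure is correct, including your remark that in constant curvature the absolute-continuity step is an explicit change of variables rather than a general theorem.

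One point as written does not match the stated hypothesis: you invoke uniform continuity of $f$ on ``the compact manifold $T^1\Sigma$,'' but Fact~\ref{Hopf} assumes only that $\Sigma=\D/\Gamma$ has finite hyperbolic area, so $\Sigma$ may have cusps and $T^1\Sigma$ need not be compact. (In the paper's application $\Sigma$ is compact of genus $\geq 2$, so nothing is lost there, but the Fact is stated in the finite-area generality.) The repair is standard and small: take $f$ in $C_c(T^1\Sigma)$, which is still dense in $L^2$ of the finite Liouville measure and is uniformly continuous, and compare forward averages only at pairs of points on the same stable horocycle at which both averages exist, a full-measure condition handled by the usual Fubini step in the product coordinates you set up. With that adjustment (or via your alternative appeal to Howe--Moore, which covers all lattices at once), your outline is a faithful summary of the cited proof.
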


We use the following Fatou type theorem.
\begin{Fact}[cf. Tsuji {\cite[Theorem IV.13]{T}} ]
\label{Fatou}
Let $f$ be a bounded holomorphic function on $\D \times  \D$. Then, 
there exists a measurable function $\tilde{f}\colon \pa\D \times \pa\D \to \C$ such that for almost all $(z_0, w_0) \in \pa\D \times \pa\D$,
\[
\lim_{(z, w) \to (z_0, w_0)} f(z,w) = f(z_0, w_0)
\]	
where $z$ and $w$ approach to $z_0$ and $w_0$ non-tangentially respectively.
Moreover, $f$ is a constant function if $\tilde{f}$ is constant on a subset of positive measure. 
\end{Fact}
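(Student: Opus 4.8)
The plan is to deduce the statement from the one-variable Fatou theorem quoted from Tsuji \cite{T} together with elementary Hardy space theory on the bidisk. First I would record the Poisson representation. Since $f \in H^\infty(\D\times\D) \subset H^2(\D\times\D)$, its Taylor expansion $f(z,w) = \sum_{j,k\geq 0} a_{jk} z^j w^k$ satisfies $\sum_{j,k}|a_{jk}|^2 = \lim_{r\to 1^-}\int_{\pa\D\times\pa\D}|f(r\zeta,r\eta)|^2\,dm(\zeta)\,dm(\eta) \leq \|f\|_\infty^2$, where $m$ is normalized Lebesgue measure on $\pa\D$; hence the dilates $f_r(z,w):=f(rz,rw)$ converge in $L^2(\pa\D\times\pa\D)$, as $r\to 1^-$, to $\tilde f(\zeta,\eta):=\sum_{j,k\geq 0} a_{jk}\zeta^j\eta^k$, and since $\|f_r\|_\infty \leq \|f\|_\infty$ for every $r$ we get $\tilde f \in L^\infty(\pa\D\times\pa\D)$. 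A comparison of Fourier coefficients then yields
\[
f(z,w) = \int_{\pa\D}\int_{\pa\D} P(z,\zeta)\, P(w,\eta)\, \tilde f(\zeta,\eta)\, dm(\zeta)\, dm(\eta), \qquad (z,w)\in\D\times\D,
\]
where $P(z,\zeta)=(1-|z|^2)/|z-\zeta|^2$ is the Poisson kernel of $\D$.

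The core of the matter is the almost-everywhere convergence of this product Poisson integral along the restricted approach region, that is, along the product of two Stolz angles. A direct iteration of the one-variable theorem is not available here, because fixing $w$ and applying Fatou to $z\mapsto f(z,w)$ yields an exceptional null set of boundary points depending on $w$, and an uncountable union of null sets need not be null. To get around this I would use the iterated non-tangential maximal operator: writing $\mathcal{M}_{\zeta_0} h := \sup\{\,|\int_{\pa\D} P(z,\zeta)h(\zeta)\,dm(\zeta)| : z \text{ in a fixed Stolz angle at } \zeta_0\,\}$ for the one-variable operator, two applications of the one-variable maximal estimate give, for $(z,w)$ in the product cone at $(z_0,w_0)$, the pointwise bound $|f(z,w)| \leq \mathcal{M}_{w_0}\big(\eta\mapsto \mathcal{M}_{z_0}(\tilde f(\cdot,\eta))\big)$. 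Since $\mathcal{M}$ is of strong type $(2,2)$ on $\pa\D$, the composition appearing on the right is bounded on $L^2(\pa\D\times\pa\D)$, and the standard maximal-function argument --- first checking convergence for the partial sums $\sum_{j,k\leq N} a_{jk} z^j w^k$, which are polynomials continuous on $\ol\D\times\ol\D$ and converge to $\tilde f$ in $L^2$, then using the strong-type bound to control the tail --- upgrades this to convergence of $f(z,w)$ to $\tilde f(z_0,w_0)$ for almost every $(z_0,w_0)$ along the restricted approach region. Alternatively this step can be quoted verbatim from Rudin's \emph{Function Theory in Polydiscs}.

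It remains to prove the rigidity statement. Suppose $\tilde f$ equals a constant $c$ on a set $E \subset \pa\D\times\pa\D$ of positive measure; replacing $f$ by $f-c$, which changes only the coefficient $a_{00}$, I may assume $c=0$. By Fubini the set $A:=\{\eta\in\pa\D : |E_\eta|>0\}$ of fat horizontal slices, where $E_\eta:=\{\zeta : (\zeta,\eta)\in E\}$, has positive measure. For almost every $\eta$ the slice $\zeta\mapsto\tilde f(\zeta,\eta)$ lies in $H^2(\pa\D)$, being the boundary value of $F_\eta(z):=\sum_{j\geq 0} b_j(\eta)z^j$ with $b_j(\eta):=\sum_{k\geq 0} a_{jk}\eta^k$ and $\sum_j|b_j(\eta)|^2<\infty$; for $\eta\in A$ this boundary value vanishes on the positive-measure set $E_\eta$, so the one-variable uniqueness theorem for Hardy functions forces $F_\eta\equiv 0$, i.e.\ $b_j(\eta)=0$ for every $j$. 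Fixing $j$, the map $\eta\mapsto b_j(\eta)$ is itself the boundary value of a function in $H^2(\D)$ that vanishes on the positive-measure set $A$, hence vanishes identically; therefore $a_{jk}=0$ for all $j,k$ and $f$ is constant. The main obstacle is the second paragraph: the breakdown of naive iteration forces the almost-everywhere boundary convergence along the product region to be obtained through a genuine two-parameter maximal-function estimate (or imported from polydisc Hardy theory).
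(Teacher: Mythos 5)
The paper does not actually prove this Fact: it is imported from the literature (``cf.\ Tsuji, Theorem IV.13''), the bidisk statement being the restricted nontangential Fatou theorem together with the uniqueness theorem for boundary values, as in Rudin's \emph{Function Theory in Polydiscs}. So there is no internal argument to compare with; judged on its own, your proof is the standard one and is correct. The Poisson representation of $f$ by the $L^2$-limit $\tilde f$ of its dilates, the explicit recognition that slicewise iteration of the one-variable Fatou theorem fails because the exceptional null set depends on the frozen variable, and its replacement by the iterated nontangential maximal operator (strong type $(2,2)$ in each variable, hence bounded on $L^2(\pa\D\times\pa\D)$ by Fubini) combined with approximation by rectangular partial sums is exactly how restricted a.e.\ convergence is established on polydiscs; and the rigidity step by slicing is the standard proof of the uniqueness theorem. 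One detail worth spelling out in a complete write-up: the assertion that for a.e.\ $\eta$ the slice $\zeta\mapsto\tilde f(\zeta,\eta)$ is an $H^2(\pa\D)$ boundary function is not automatic from $\tilde f\in L^2$; it holds because for each fixed $n>0$ the function $\eta\mapsto\int_{\pa\D}\tilde f(\zeta,\eta)\,\zeta^{\,n}\,dm(\zeta)$, which records the frequency $-n$ part of the slice, is an $L^2$ function of $\eta$ all of whose Fourier coefficients are Fourier coefficients of $\tilde f$ at frequencies $(-n,k)$ and hence vanish (the spectrum of $\tilde f$ lies in $\{j,k\geq 0\}$ since $\tilde f$ is the $L^2$-limit of the dilates $f_r$), so it is zero a.e., and one intersects the resulting full-measure sets over the countably many $n$. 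With that, plus the routine identification of $b_j(\eta)$ with the $j$-th Fourier coefficient of the slice (so that it is itself an $H^2$ boundary function of $\eta$ because $\sum_k|a_{jk}|^2<\infty$), your argument is complete; alternatively both halves can simply be quoted from Rudin, which is in effect what the paper does by citing Tsuji.
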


\begin{Theorem}
\label{liouville}
Any bounded holomorphic function on $X$ is constant. 
\end{Theorem}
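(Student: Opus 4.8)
The plan is to deduce Theorem~\ref{liouville} directly from the two facts just recalled. Let $f$ be a bounded holomorphic function on $X$. Since the action $\gamma\cdot(z,w)=(\gamma z,\overline{\gamma\overline w})$ of $\Gamma$ on $\D\times\D$ is holomorphic, free and properly discontinuous, $\D\times\D$ is the universal cover of $X$, and the pullback $F$ of $f$ is a bounded holomorphic function on $\D\times\D$ invariant under this action. By Fact~\ref{Fatou}, $F$ admits non-tangential boundary values $\widetilde F$ almost everywhere on $\pa\D\times\pa\D$; it then suffices to show that $\widetilde F$ is constant on a set of positive measure.

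First I would verify that $\widetilde F$ is invariant under the induced action $\gamma\cdot(\xi,\eta)=(\xi',\eta')$ with $\xi'=\gamma\xi$ and $\eta'=\overline{\gamma\overline\eta}$ of $\Gamma$ on $\pa\D\times\pa\D$. Each $\gamma\in\Gamma$ and the conjugated transformation $\eta\mapsto\overline{\gamma\overline\eta}$ are M\"obius transformations, hence biholomorphic on a neighborhood of $\overline\D$ and mapping $\D$ to $\D$ and $\pa\D$ to $\pa\D$; being conformal with non-vanishing derivative at the boundary, they carry a Stolz cone at a boundary point into a Stolz region at the image point. Therefore the non-tangential limit of $F\circ\gamma=F$ at $(\xi,\eta)$ equals the non-tangential limit of $F$ at $\gamma\cdot(\xi,\eta)$ (the approach being non-tangential in each factor separately, as in Fact~\ref{Fatou}), which yields $\widetilde F\circ\gamma=\widetilde F$ almost everywhere, the relevant full-measure sets agreeing up to null sets because $\gamma$ preserves the Lebesgue measure class.

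Next I would transfer Hopf's theorem to this twisted action. The involution $c(\eta)=\overline\eta$ of $\pa\D$ intertwines the conjugated transformation with $\gamma$, namely $c(\overline{\gamma\overline\eta})=\gamma(c(\eta))$, so the Lebesgue-measure-preserving map $(\xi,\eta)\mapsto(\xi,\overline\eta)$ of $\pa\D\times\pa\D$ conjugates the twisted diagonal action of $\Gamma$ to the usual diagonal action. By Fact~\ref{Hopf} the latter is ergodic, hence so is the former, and every $\Gamma$-invariant measurable function on $\pa\D\times\pa\D$ is constant almost everywhere. Applying this to $\widetilde F$ and then the last sentence of Fact~\ref{Fatou}, we conclude that $F$, and hence $f$, is constant.

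The step requiring the most care is the passage of $\Gamma$-invariance from $F$ on $\D\times\D$ to the non-tangential boundary values $\widetilde F$: one has to use that the automorphisms in play are genuinely biholomorphic across $\overline\D$, so that non-tangential approach regions are preserved factorwise, rather than merely that they are automorphisms of $\D$. I would also record that the alternative route announced in the introduction --- using the plurisubharmonic bounded exhaustion of $X$ that solves the homogeneous complex Monge--Amp\`ere equation on $X\setminus S$ --- furnishes a second, Fatou-free proof, which I would present as well.
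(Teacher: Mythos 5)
Your proposal is correct and follows essentially the same route as the paper's first proof: apply the Fatou-type theorem (Fact~\ref{Fatou}) to the pullback on $\D\times\D$, note that the boundary value function is invariant under the twisted action and that the conjugation $(\xi,\eta)\mapsto(\xi,\overline\eta)$ turns this into the usual diagonal action, then invoke Hopf's ergodicity theorem (Fact~\ref{Hopf}) and conclude via the last sentence of Fact~\ref{Fatou}. Your added care about Möbius transformations preserving non-tangential approach regions factorwise is a detail the paper leaves implicit, but the argument is the same one.
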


\begin{proof}[First proof of Theorem \ref{liouville}]
Let $f$ be a bounded holomorphic function on $X = \D \times \D /\Gamma$. 
From Fact \ref{Fatou}, $f$ as a function on $\D \times \D$ has boundary value $\tilde{f}$ on $\pa\D \times \pa\D$ which is invariant under the action of $\Gamma$. 
Then, the function $(z,w) \mapsto \tilde{f}(z, \ol{w})$ on $\pa\D \times \pa\D$ is invariant under the diagonal action of $\Gamma$. Fact \ref{Hopf} implies that $\tilde{f}$ is constant almost everywhere, and we conclude by Fact \ref{Fatou}.
\end{proof}

We shall give another proof, which does not rely on Fact \ref{Hopf} and explains how the bounded exhaustion $\rho$ controls the growth of holomorphic functions on $X$.

\begin{proof}[Second proof of Theorem \ref{liouville}]
Let $f$ be a bounded holomorphic function on $X$. We shall show without using Fact \ref{Hopf} that the boundary value function $\tilde{f}$ on $\pa\D \times \pa\D$ is constant almost everywhere. Then the rest of the proof is the same as in the first proof.

We apply the integration formula used in Adachi--Brinkschulte \cite{AdB2} with the maximal plurisubharmonic function $\rho$ on $X \setminus S$ used in Proposition \ref{grauert_tube}. Namely, we integrate
\[
i \pa\opa |f|^2 \wedge d \rho \wedge d^c \rho + |f|^2 (i \pa\opa \rho)^2 = d (d^c |f|^2 \wedge i\pa \rho \wedge \opa \rho + |f|^2 d^c \rho \wedge i \pa\opa\rho)
\]
on $\rho^{-1}(a, b)$, where our convention is $d^c := (\pa - \opa)/2i$. 
Since all the level sets $\rho^{-1}(c)$, $c \in (0, \pi/2]$, are smooth, for any $a, b \in (0, \pi/2)$, $a < b$, we have
\[
\int_{\rho^{-1}(a, b)} i \pa\opa |f|^2 \wedge d \rho \wedge d^c \rho = \int_{\rho^{-1}(b)} |f|^2 d^c \rho \wedge i \pa\opa\rho - \int_{\rho^{-1}(a)} |f|^2 d^c \rho \wedge i \pa\opa\rho.
\]
Denoting by $M_t$ the boundary of $\{ x \in X \mid \rho(x) < t \} = \{ x \in X \mid \delta(x) > \cos^2 t \}$ and rewriting in $\delta$ instead of $\rho$ yield
\begin{align}
\int_{\delta^{-1}(\beta, \alpha)} i \pa\opa |f|^2 \wedge \frac{d \delta \wedge d^c \delta}{\delta(1-\delta)} 
&= \frac{1}{\sin^2 b} \int_{M_\beta} |f|^2 d^c (-\delta) \wedge i \pa\opa(-\log \delta) 
\label{eq} \\
&\quad - \frac{1}{\sin^2 a} \int_{M_\alpha} |f|^2 d^c (-\delta) \wedge i \pa\opa(-\log \delta) \notag
\end{align}
where $\alpha := \cos^2 a$ and $\beta := \cos^2 b$. 

Now we look at behavior of terms in Equation (\ref{eq}) when $b \nearrow \pi/2$, that is, $\beta \searrow 0$. 
For its RHS, we compute the first term using a smooth trivialization
\[
\iota_t \colon R \times \pa\D \to M_t, \quad (z, e^{i\theta}) \mapsto \left(z, \frac{(\sin t)e^{i\theta}+\ol{z}}{1+z(\sin t)e^{i\theta}}\right) 
\]
for $t \in (0,\pi/2]$ where $R$ is a fundamental domain of the action of $\Gamma$ on $\D$. It follows that
\begin{align*}
& \frac{\beta}{\sin^2 b}  \int_{M_\beta} |f|^2 d^c (-\log \delta) \wedge i \pa\opa(-\log \delta) \\
&=\frac{\beta}{\sin^2 b} \int_{M_\beta} |f|^2 \left(\frac{i dz \wedge d\ol{z}\wedge \frac{1}{2i}\left( \frac{\ol{w}-z}{1-zw}dw - \frac{w-\ol{z}}{1-\ol{zw}}d\ol{w}\right)}{(1-|z|^2)^2(1-|w|^2)} +  \frac{i dw \wedge d\ol{w}\wedge \frac{1}{2i}\left( \frac{\ol{z}-w}{1-zw}dz - \frac{z-\ol{w}}{1-\ol{zw}}d\ol{z}\right)}{(1-|w|^2)^2 (1-|z|^2)}\right)\\
&= \frac{1}{\sin^2 b} \int_{R \times \pa\D} |\iota_t^* f|^2 \frac{i dz \wedge d\ol{z}\wedge 2(\sin^2 b) d\theta}{(1-|z|^2)^2} \leq 4\pi^2 \sup_X |f|^2 (2g - 2) < \infty 
\end{align*}
where $g$ is the genus of $\Sigma$. Therefore, the LHS should be finite; on the other hand,
\begin{align*}
\int_{\delta^{-1}(\beta, \alpha)} i \pa\opa |f|^2 \wedge \frac{d \delta \wedge d^c \delta}{\delta(1-\delta)} 
& = \int_{\beta}^{\alpha} \frac{dt}{t(1-t)} \int_{M_{t}}i \pa f \wedge \opa \ol{f} \wedge d^c (-\delta),
\end{align*}
and the integrability requires 
\[
\limsup_{t \nearrow \pi/2}  \int_{M_{s}}i \pa f \wedge \opa \ol{f} \wedge d^c (-\delta) = 0.
\]

We can compute this limit in two ways. 
Note that we may apply Fact \ref{Fatou} to not only $f$ but also
\[
\frac{\pa^2 f}{\pa z^2}, \frac{\pa^2 f}{\pa z\pa w}, \frac{\pa^2 f}{\pa w^2}
\]
since they are bounded holomorphic functions on $\D \times \D$ from Cauchy's estimate, and we obtain their boundary value functions on $\D \times \pa \D \sqcup \pa \D \times \D$, which are CR functions. By abuse of notation, we express the boundary value functions by the same symbols. Using the trivialization $\iota_t$ of $M_t$, the bounded convergence theorem yields
\begin{align}
\label{boundary1}
0 &= \lim_{t \nearrow \pi/2}\int_{M_{t}} i \pa f \wedge \opa \ol{f} \wedge d^c (-\delta) \\
&= \lim_{t \nearrow \pi/2} \int_{R \times \pa\D} \iota_t^* \left(i\pa f \wedge \opa \ol{f} \wedge d^c (-\delta)\right) \notag \\ 
&= \int_{R \times \pa\D} \iota_{\pi/2}^* \left(i\pa f \wedge \opa \ol{f} \wedge d^c (-\delta)\right) \notag \\
&= \int_{M} \left|\frac{\pa f}{\pa z}\right|^2 i dz\wedge d\ol{z} \wedge \frac{1-|z|^2}{|1 - ze^{i\varphi}|^2} d\varphi \notag
\end{align}
where we used the coordinate $(z, e^{i\varphi}) \in \D \times \pa\D$ for $\iota_{\pi/2}(R \times \pa\D) \subset M = \D \times \pa\D/\Gamma$.
Using another trivialization $\kappa_t$ of $M_t$, 
\[
\kappa_t \colon \pa\D \times R' \to M_t, \quad (e^{i\theta'}, w) \mapsto \left( \frac{(\sin t)e^{i\theta'}+\ol{w}}{1+w(\sin t)e^{i\theta'}}, w\right) 
\]
for $t \in (0,\pi/2]$ where $R'$ is a fundamental domain of the conjugated action of $\Gamma$ on $\D$, we similarly have
\begin{equation}
\label{boundary2}
0 = \int_{M'} \left|\frac{\pa f}{\pa w}\right|^2 i dw\wedge d\ol{w} \wedge \frac{1-|w|^2}{|1 - we^{i\varphi'}|^2} d\varphi'
\end{equation}
where we used the coordinate $(e^{i\varphi'}, w) \in \pa\D \times \D$ for $\iota'_{\pi/2}(\pa\D \times R') \subset M' = \pa\D \times \D/\Gamma$.

Equations (\ref{boundary1}) and (\ref{boundary2}) imply that the boundary value functions
$f(z, e^{i\varphi})$ and $f(e^{i\varphi'}, w)$ are constant functions in $z$ and $w$ for almost all $e^{i\varphi}$ and $e^{i\varphi'} \in \pa\D$ since these functions are holomorphic in $z$ and $w$ respectively. 
Now it follows that $\tilde{f}(z, w) = f(e^{i\varphi'}, e^{i\varphi})\colon \pa\D \times \pa\D \to \C$ agrees with a constant function almost everywhere, and we finish this proof.
\end{proof}

\begin{Remark}
The integration formula used in the proof is equivalent to 
Demailly's Lelong--Jensen formula \cite{De}. Exploiting this formula, a notion of Hardy space for hyperconvex domains in $\C^n$, \emph{Poletsky--Stessin Hardy spaces}, was introduced in Alan \cite{Al} and Poletsky--Stessin \cite{PSte} independently (cf. Alan--G{\"o}{\u{g}}{\"u}{\c{s}} \cite{AlG}). The proof above actually shows the triviality of $L^2$ Hardy space of $X \subset Y,  Y'$.
\end{Remark}

\begin{Remark}
Yet another proof for the Liouville property which does not employ Fact \ref{Fatou} can be obtained by a method similar to \cite{Ad}, which will be discussed in the author's forthcoming article. 
As in \cite{Ad}, we may show that all the weighted Bergman space of order $> -1$ of $X \subset Y, Y'$ is infinite dimensional in spite of the fact that its $L^2$ Hardy space is trivial.
\end{Remark}

\section{Open problems}
We shall pose two open problems for further study. 

\begin{Problem}
Do other Grauert tubes of finite maximal radius give similar example of hyperconvex manifolds without non-constant bounded holomorphic function? 
\end{Problem}

\begin{Problem}
Is there any domain with Levi-flat boundary having positive Diederich--Forn{\ae}ss index and non-constant bounded holomorphic function? 
\end{Problem}

Problem 2 is a variant of an open problem raised by Sidney Frankel (cf. Ohsawa \cite{O}),
to classify Levi-flat hypersurfaces that bound domains with non-constant bounded holomorphic functions.

\subsection*{Acknowledgement} 
The author is grateful to Kang-Tae Kim, who explained him the notion of parabolic manifold 
when he was a postdoc at SRC-GAIA, which is supported by an NRF grant 2011-0030044 of the Ministry of Education, the Republic of Korea.
This work was also supported by JSPS KAKENHI Grant Number 26800057.

\begin{bibdiv}
\begin{biblist}

\bib{Ad}{article}{
   author={Adachi, Masanori},
   title={Weighted Bergman spaces of domains with Levi-flat boundary: geodesic segments on compact Riemann surfaces},
   status={Preprint},
   eprint={arXiv:1703.08165},
}
\bib{AdB1}{article}{
   author={Adachi, Masanori},
   author={Brinkschulte, Judith},
   title={A global estimate for the Diederich-Fornaess index of weakly
   pseudoconvex domains},
   journal={Nagoya Math. J.},
   volume={220},
   date={2015},
   pages={67--80},
   issn={0027-7630},
   review={\MR{3429725}},
}

\bib{AdB2}{article}{
   author = {Adachi, Masanori},
   author = {Brinkschulte, Judith},
   title = {Curvature restrictions for Levi-flat real hypersurfaces in complex projective planes},
   journal = {Ann. Inst. Fourier (Grenoble)},
   date={2015},
   volume={65},
   number={6},
   pages={2547--2569},
}
	
\bib{Al}{article}{
   author = {Alan, Muhammed Al\.i},
   title = {Hardy Spaces on Hyperconvex Domains},
   status = {Master thesis at Middle East Technical University, Ankara (2003)},
}
\bib{AlG}{article}{
   author = {Alan, Muhammed Al\.i},
   author={G{\"o}{\u{g}}{\"u}{\c{s}}, N. G.},
   title={Poletsky-Stessin-Hardy spaces in the plane},
   journal={Complex Anal. Oper. Theory},
   volume={8},
   date={2014},
   number={5},
   pages={975--990},
}

\bib{AySa}{article}{
   author={Aytuna, A.},
   author={Sadullaev, A.},
   title={Parabolic Stein manifolds},
   journal={Math. Scand.},
   volume={114},
   date={2014},
   number={1},
   pages={86--109},
   issn={0025-5521},
   review={\MR{3178107}},
   doi={10.7146/math.scand.a-16640},
}

\bib{De}{article}{
   author={Demailly, Jean-Pierre},
   title={Mesures de Monge-Amp\`ere et mesures pluriharmoniques},
   journal={Math. Z.},
   volume={194},
   date={1987},
   number={4},
   pages={519--564},
}

\bib{DeFo}{article}{
   author={Deng, Fusheng},
   author={Forn{\ae}ss, John Erik},
   title={Flat bundles over some compact complex manifolds},
   status={Preprint},
   eprint={ arXiv:1710.08046},
}

\bib{DiFo}{article}{
   author={Diederich, Klas},
   author={Fornaess, John Erik},
   title={Pseudoconvex domains: bounded strictly plurisubharmonic exhaustion
   functions},
   journal={Invent. Math.},
   volume={39},
   date={1977},
   number={2},
   pages={129--141},
   issn={0020-9910},
   review={\MR{0437806}},
}
\bib{DiO}{article}{
   author={Diederich, Klas},
   author={Ohsawa, Takeo},
   title={Harmonic mappings and disc bundles over compact K\"ahler
   manifolds},
   journal={Publ. Res. Inst. Math. Sci.},
   volume={21},
   date={1985},
   number={4},
   pages={819--833},
}
\bib{FuSh}{article}{
   author={Fu, Siqi},
   author={Shaw, Mei-Chi},
   title={The Diederich-Forn{\ae}ss exponent and non-existence of Stein domains with Levi-flat boundaries},
   journal={J. Geom. Anal.},
   volume={26},
   date={2016},
   number={1},
   pages={220--230},
}

\bib{Ga}{article}{
   author={Garnett, Lucy},
   title={Foliations, the ergodic theorem and Brownian motion},
   journal={J. Funct. Anal.},
   volume={51},
   date={1983},
   number={3},
   pages={285--311},
}
\bib{GuSt}{article}{
   author={Guillemin, Victor},
   author={Stenzel, Matthew},
   title={Grauert tubes and the homogeneous Monge-Amp\`ere equation},
   journal={J. Differential Geom.},
   volume={34},
   date={1991},
   number={2},
   pages={561--570},
   issn={0022-040X},
   review={\MR{1131444}},
}
		
\bib{He}{book}{
   author={Heins, Maurice},
   title={Hardy classes on Riemann surfaces},
   series={Lecture Notes in Mathematics, No. 98},
   publisher={Springer-Verlag, Berlin-New York},
   date={1969},
   pages={iii+106},
   review={\MR{0247069}},
}
\bib{Ho}{article}{
   author={Hopf, Eberhard},
   title={Fuchsian groups and ergodic theory},
   journal={Trans. Amer. Math. Soc.},
   volume={39},
   date={1936},
   number={2},
   pages={299--314},
   issn={0002-9947},
   review={\MR{1501848}},
}
\bib{Ka}{article}{
   author={Kan, Su-Jen},
   title={On the characterization of Grauert tubes covered by the ball},
   journal={Math. Ann.},
   volume={309},
   date={1997},
   number={1},
   pages={71--80},
   issn={0025-5831},
   review={\MR{1467646}},
}
	
\bib{KeR}{article}{
   author={Kerzman, Norberto},
   author={Rosay, Jean-Pierre},
   title={Fonctions plurisousharmoniques d'exhaustion born\'ees et domaines
   taut},
   language={French},
   journal={Math. Ann.},
   volume={257},
   date={1981},
   number={2},
   pages={171--184},
   issn={0025-5831},
   review={\MR{634460}},
}

\bib{L}{article}{
   author={Lempert, L\'aszl\'o},
   title={Elliptic and hyperbolic tubes},
   conference={
      title={Several complex variables},
      address={Stockholm},
      date={1987/1988},
   },
   book={
      series={Math. Notes},
      volume={38},
      publisher={Princeton Univ. Press, Princeton, NJ},
   },
   date={1993},
   pages={440--456},
   review={\MR{1207872}},
}
	
\bib{LSz}{article}{
   author={Lempert, L\'aszl\'o},
   author={Sz\H oke, R\'obert},
   title={Global solutions of the homogeneous complex Monge-Amp\`ere equation
   and complex structures on the tangent bundle of Riemannian manifolds},
   journal={Math. Ann.},
   volume={290},
   date={1991},
   number={4},
   pages={689--712},
   issn={0025-5831},
   review={\MR{1119947}},
}
\bib{Mi}{article}{
  author={Mitsumatsu, Yoshihiko},
  title={private communication},
}

\bib{My}{article}{
   author={Myrberg, P. J.},
   title={\"Uber die analytische Fortsetzung von beschr\"ankten Funktionen},
   language={German},
   journal={Ann. Acad. Sci. Fennicae. Ser. A. I. Math.-Phys.},
   volume={1949},
   date={1949},
   number={58},
   pages={7},
   review={\MR{0028400}},
}

\bib{O}{article}{
  author={Ohsawa, Takeo},
  title={Levi flat hypersurfaces --- Results and questions around basic examples},
  status={manuscript prepared for a workshop at KIAS in April 2016},
}

\bib{PSte}{article}{
  author={Poletsky, Evgeny A.},
  author={Stessin, Michael I.},
  title={Hardy and Bergman spaces on hyperconvex domains and their composition operators},
  journal={Indiana Univ. Math. J.},
  volume={57},
  date={2008},
  number={5},
  pages={2153--2201},
}

\bib{Sto}{article}{
   author={Stoll, Wilhelm},
   title={Vari\'et\'es strictement paraboliques},
   language={French, with English summary},
   journal={C. R. Acad. Sci. Paris S\'er. A-B},
   volume={285},
   date={1977},
   number={12},
   pages={A757--A759},
   review={\MR{0457793}},
}
\bib{Su}{article}{
   author={Sullivan, Dennis},
   title={On the ergodic theory at infinity of an arbitrary discrete group
   of hyperbolic motions},
   conference={
      title={Riemann surfaces and related topics: Proceedings of the 1978
      Stony Brook Conference},
      address={State Univ. New York, Stony Brook, N.Y.},
      date={1978},
   },
   book={
      series={Ann. of Math. Stud.},
      volume={97},
      publisher={Princeton Univ. Press, Princeton, N.J.},
   },
   date={1981},
   pages={465--496},
   review={\MR{624833}},
}
		
\bib{T}{book}{
   author={Tsuji, M.},
   title={Potential theory in modern function theory},
   publisher={Maruzen Co., Ltd., Tokyo},
   date={1959},
   pages={590},
   review={\MR{0114894}},
}
		
\end{biblist}
\end{bibdiv}
\end{document}